\documentclass[a4paper,11pt]{amsart}
\usepackage{amssymb,mathtools,amsthm}
\usepackage[pdftex]{graphicx}
\usepackage[all]{xy}
\usepackage{ytableau}
\ytableausetup{centertableaux}
\usepackage{cleveref}
\usepackage{url}

\addtolength{\textwidth}{2cm}
\addtolength{\oddsidemargin}{-1cm}
\addtolength{\evensidemargin}{-1cm}

\numberwithin{equation}{section}

\theoremstyle{plain}
\newtheorem{thm}{Theorem}[section]
\newtheorem{prop}[thm]{Proposition}
\newtheorem{lem}[thm]{Lemma}

\theoremstyle{definition}
\newtheorem{defn}[thm]{Definition}

\newtheorem{ex}[thm]{Example}
\newtheorem{rem}[thm]{Remark}

\newcommand{\Z}{\mathbb{Z}}

\newcommand{\bk}{{\boldsymbol{k}}}

\newcommand{\bm}{{\boldsymbol{m}}}
\newcommand{\bn}{{\boldsymbol{n}}}
\newcommand{\ba}{{\boldsymbol{a}}}
\newcommand{\bb}{{\boldsymbol{b}}}
\newcommand{\bone}{{\boldsymbol{1}}}
\newcommand{\bN}{{\boldsymbol{N}}}

\newcommand{\tJ}{\tilde{J}}
\newcommand{\tm}{\tilde{m}}
\newcommand{\tn}{\tilde{n}}

\newcommand{\oj}{\overline{\jmath}}

\newcommand{\coloniff}{\;:\Longleftrightarrow\;}

\DeclareMathOperator{\SSYT}{SSYT}

\title[Remarks on Maesaka--Seki--Watanabe's Formula]
{Some Remarks on Maesaka--Seki--Watanabe's Formula for the Multiple Harmonic Sums}
\author{Shuji Yamamoto}
\date{}

\subjclass[2020]{11M32}
\keywords{Multiple harmonic sum; Multiple zeta value; Schur multiple harmonic sum}
\thanks{This research was supported by JSPS KAKENHI JP18H05233 and JP21K03185. }

\address{
    Interdisciplinary Faculty of Science and Engineering, Shimane University, 
    1060 Nishi-Kawatsu, Matsue, 690-8504, Japan
}
\email{yamashu@riko.shimane-u.ac.jp}

\begin{document}

\begin{abstract}
Recently, Maesaka, Seki and Watanabe discovered a surprising equality 
between multiple harmonic sums and certain Riemann sums which 
approximate the iterated integral expression of the multiple zeta values. 
In this paper, we describe the formula corresponding to the multiple zeta-star values and, 
more generally, to the Schur multiple zeta values of diagonally constant indices. 
We also discuss the relationship of these formulas with Hoffman's duality identity 
and an identity due to Kawashima. 
\end{abstract}

\maketitle

\section{Introduction}
For a finite tuple $\bk=(k_1,\ldots,k_r)$ of positive integers, 
called an \emph{index}, we define the \emph{multiple harmonic sums} 
\[\zeta_{<N}(\bk)\coloneqq
\sum_{0<m_1<\cdots<m_r<N}\frac{1}{m_1^{k_1}\cdots m_r^{k_r}},\]
where $N$ is any positive integer. 
By convention, we set $\zeta_{<N}(\varnothing)\coloneqq 1$ for the empty index $\varnothing$ 
(i.e., $r=0$). 
When the index $\bk$ is \emph{admissible}, i.e., $k_r\ge 2$ or $\bk=\varnothing$, the limit 
\[\zeta(\bk)\coloneqq \lim_{N\to\infty}\zeta_{<N}(\bk)
=\sum_{0<m_1<\cdots<m_r}\frac{1}{m_1^{k_1}\cdots m_r^{k_r}}\]
is called the \emph{multiple zeta value}. 

Recently, Maesaka--Seki--Watanabe \cite{MSW} introduced another kind of finite sum: 
\begin{equation}\label{eq:flat sum}
\zeta^\flat_{<N}(\bk)\coloneqq 
\sum_{\substack{0<n_{i1}\le\cdots\le n_{ik_i}<N\;(1\le i\le r)\\
n_{ik_i}<n_{(i+1)1}\;(1\le i<r)}}
\prod_{i=1}^r\frac{1}{(N-n_{i1})n_{i2}\cdots n_{ik_i}}. 
\end{equation}
For example, we have 
\[\zeta^\flat_{<N}(2,1,3)=
\sum_{0<n_1\le n_2<n_3<n_4\le n_5\le n_6<N}
\frac{1}{(N-n_1)n_2\cdot (N-n_3)\cdot (N-n_4)n_5n_6}. \]
Note that this sum can be written as 
\[\zeta^\flat_{<N}(2,1,3)=\frac{1}{N^6}
\sum_{0<n_1\le n_2<n_3<n_4\le n_5\le n_6<N}
\frac{1}{(1-\frac{n_1}{N})\frac{n_2}{N}\cdot (1-\frac{n_3}{N})\cdot
(1-\frac{n_4}{N})\frac{n_5}{N}\frac{n_6}{N}}, \]
which is a Riemann sum which approximates the well-known 
iterated integral expression 
\[\zeta(2,1,3)=\int_{0<x_1<x_2<x_3<x_4<x_5<x_6<1}
\frac{dx_1}{1-x_1}\frac{dx_2}{x_2}\cdot\frac{dx_3}{1-x_3}\cdot
\frac{dx_4}{1-x_4}\frac{dx_5}{x_5}\frac{dx_6}{x_6}. \]
Thus we have 
\[\lim_{N\to\infty}\zeta_{<N}(2,1,3)=\zeta(2,1,3)
=\lim_{N\to\infty}\zeta^\flat_{<N}(2,1,3), \]
and the same holds for any admissible index. 
The surprising discovery of Maesaka--Seki--Watanabe \cite{MSW} is that 
the equality holds before taking the limit: 

\begin{thm}[{\cite[Theorem 1.3]{MSW}}]\label{thm:MSW}
For any index $\bk$ and any integer $N>0$, we have 
\begin{equation}\label{eq:MSW}
\zeta_{<N}(\bk)=\zeta_{<N}^\flat(\bk). 
\end{equation}
\end{thm}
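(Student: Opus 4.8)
The plan is to prove \eqref{eq:MSW} by the \emph{connected sum} method: interpolate between the two sides through a family of auxiliary sums in which the first few blocks are written in the ``harmonic'' shape $\tfrac{1}{m^{k}}$ while the remaining blocks are already in the ``flat'' shape, the two halves being glued along an interface by a suitable connector. Concretely, I would fix the global modulus $N$ and introduce, for $0\le s\le r$, a sum $Z_s$ of the form
\begin{equation*}
Z_s=\sum \Bigl[\prod_{i=1}^{s}\frac{1}{(N-n_{i1})n_{i2}\cdots n_{ik_i}}\Bigr]\cdot C(\,\cdot\,)\cdot\Bigl[\prod_{i=s+1}^{r}\frac{1}{m_i^{k_i}}\Bigr],
\end{equation*}
where the flat variables of blocks $1,\dots,s$ and the harmonic variables $m_{s+1}<\cdots<m_r$ obey the evident ordering, and $C$ is a connector depending on the top flat variable $n_{sk_s}$ and the bottom harmonic variable $m_{s+1}$. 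The boundary cases should be arranged so that $Z_0=\zeta_{<N}(\bk)$ and $Z_r=\zeta_{<N}^\flat(\bk)$; the whole identity then follows from the \emph{transport relation} $Z_s=Z_{s+1}$ for each $s$.

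First I would pin down the connector by analysing the depth-one transport in isolation, i.e.\ the problem of rewriting a single harmonic factor $\tfrac{1}{m^{k}}$, sitting in a window $c<m<d$ bounded by two interfaces, as a flat block $\sum \tfrac{1}{(N-n_1)n_2\cdots n_k}$. The mechanism here is repeated Abel summation (discrete integration by parts): summing the innermost flat variable against $\tfrac1n$ produces a telescoping difference, and iterating this $k$ times converts the weakly increasing chain $n_1\le\cdots\le n_k$ into the single power $m^{-k}$ up to a correction that is absorbed into $C$. The nontrivial point is that, because each block begins with a reflected factor $\tfrac{1}{N-n_{i1}}$ rather than $\tfrac{1}{n_{i1}}$, the innermost step of the transport is governed by the substitution $n\leftrightarrow N-n$; verifying that this reflection is compatible with the interface inequalities (strict across blocks, weak within a block) is where I expect the main difficulty to lie.

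With the connector and the depth-one transport in hand, the passage $Z_s\rightsquigarrow Z_{s+1}$ amounts to applying this block conversion once, the connector precisely accounting for the discrepancy between ``$m_{s+1}$ lies above the whole block'' and ``$n_{s+1,k_{s+1}}$ lies above the block'', after which one checks the two boundary evaluations $Z_0$ and $Z_r$ directly. Assembling the chain $\zeta_{<N}(\bk)=Z_0=Z_1=\cdots=Z_r=\zeta_{<N}^\flat(\bk)$ finishes the proof. As a sanity check I would first run the argument for $r=1$ and small $k$ (where $k=1$ reduces to the bijection $m\mapsto N-m$ and $k=2$ is a short double sum), both to calibrate the connector and to confirm that no boundary terms are lost.
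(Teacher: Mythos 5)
Your overall architecture---a family of connected sums $Z_s$ interpolating between $\zeta_{<N}(\bk)$ and $\zeta^\flat_{<N}(\bk)$, glued by a connector at the harmonic/flat interface, with a transport relation $Z_s=Z_{s+1}$ and two boundary evaluations---is exactly the connector method of \cite{MSW}, which this paper reproduces in Schur generality in \S3 (\Cref{thm:MSW} is the one-column specialization of \Cref{thm:MSW Schur}). But as a proof your proposal has a genuine gap: the connector, which carries the entire content of the argument, is never produced, and the mechanism you offer in its place (``repeated Abel summation \dots\ produces a telescoping difference'') is not correct as stated. Plain telescoping does not turn $1/m^k$ into the flat shape, and a naive connector (e.g.\ an indicator enforcing the interface inequality) satisfies the first transport but fails all subsequent ones, because later transports must hold \emph{slice-wise}, with every remaining variable fixed. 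What makes the method work is a very specific kernel threaded through every summand: requiring $\tfrac1m\bigl(C(m,n)-C(m,n-1)\bigr)=C(m,n)\tfrac1n$ together with the boundary values $C(m,N-1)=1$, $C(m,n)=0$ for $n<m$, forces $C(m,n)=\binom{n}{m}\big/\binom{N-1}{m}=C_N(m,n)$, and one must then prove the two transport identities of \Cref{lem:C_N}: item (1) peels off the powers $1/m^k$ one at a time while creating the weakly increasing flat variables, and item (2) sums out the harmonic variable to produce the factor $1/(N-n_{1})$ and the next connector. Those identities \emph{are} the proof; deferring them to ``calibration'' leaves the relation $Z_s=Z_{s+1}$ impossible even to state, let alone verify.

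There is also a structural error in your setup: the interpolation runs in the wrong direction for the \emph{strict} interfaces of $\zeta^\flat_{<N}$. You place the flat blocks at the bottom ($i\le s$) and tie the connector to the top flat variable $n_{sk_s}$ and to $m_{s+1}$. With the kernel $C_N$, converting the bottom harmonic block gives
\begin{equation*}
\sum_{0<m_1<m_2}\frac{1}{m_1^{k_1}}
=\sum_{0<n_{11}\le\cdots\le n_{1k_1}<N}\bigl(1-C_N(m_2-1,n_{11}-1)\bigr)\frac{1}{(N-n_{11})n_{12}\cdots n_{1k_1}},
\end{equation*}
so the connector attaches to the \emph{bottom} flat variable $n_{11}$, not to $n_{1k_1}$; iterating then produces weak interface conditions anchored at $n_{11}$, of the type $n_{11}\le n_{2k_2}$. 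That orientation is precisely what proves the star-type formulas (\Cref{thm:MSW star}, \Cref{prop:MSW H}; compare the computation in \S4), not the strict chain $n_{1k_1}<n_{21}$ demanded by $\zeta^\flat_{<N}$. The paper and \cite{MSW} therefore run the induction the other way: the harmonic blocks stay at the bottom, the flat blocks accumulate at the top, and the connector is $C_N(m_q,n_{(q+1)1}-1)$---top surviving harmonic variable against the bottom variable of the lowest flat block; it is the application of \Cref{lem:C_N}\,(1) to this connector that creates the strict inequality $n_{qk_q}<n_{(q+1)1}$. So even granting the kernel, your $Z_s$ as defined would not transport to $Z_{s+1}$: you would have to reverse the orientation of the induction, or else construct a genuinely different kernel adapted to your orientation, which is not a routine calibration and is nowhere supplied.
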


In the following, we call this formula \eqref{eq:MSW} the \emph{MSW formula}. 
One may expect various applications and generalizations. 
In addition to the proof of duality relations given in \cite{MSW}, 
an application has been given by Seki \cite{S}, 
who provided a new proof of the extended double shuffle relation of multiple zeta values. 
Hirose, Matsusaka and Seki \cite{HMS} generalize the MSW formula to the case of multiple polylogarithms. 

The purpose of the present article is to show some results related to the MSW formula. 
First, in \S2, we describe the ``star version'' of it, that is, 
a formula of the same type for the \emph{multiple star harmonic sum} 
\[\zeta^\star_{<N}(\bk)\coloneqq 
\sum_{0<m_1\le\cdots\le m_r<N}\frac{1}{m_1^{k_1}\cdots m_r^{k_r}}.\]
The result (\Cref{thm:MSW star}) is a discrete analogue of the 2-poset integral \cite[Corollary 1.3]{Y} 
for the multiple zeta-star value. 
We will explain how this star version is deduced from the non-star version \eqref{eq:MSW} and vice versa. 

In \S3, we prove a generalization of both non-star and star MSW formulas, 
namely, the formula for \emph{Schur multiple harmonic sums with diagonally constant indices}. 
The result is a discretization of the integral expression given by Hirose--Murahara--Onozuka \cite{HMO}. 
Our proof is a natural generalization of that of \eqref{eq:MSW} 
given in \cite{MSW}, the so-called ``connector method''. 

In \S4, we examine Hoffman's duality identity 
\[\zeta_{<N}^\star(\bk^\vee)=H_{<N}(\bk)\]
in the light of the MSW formula 
(see \S4 for the definition of $\bk^\vee$ and $H_{<N}$). 
The combination of this identity and the star MSW formula provides 
an expression for $H_{<N}(\bk)$. 
We notice that this expression can be proven directly, applying the computation in \S3, 
and hence a new proof of Hoffman's identity is obtained. 

Finally, in \S5, we explain the relationship of the star MSW formula 
with an identity due to Kawashima \cite{K2}. 
We see that Kawashima's identity is, in a sense, equivalent to the star MSW formula. 

\subsection*{Notation}
For $m,n\in\Z$, we set $[m,n]\coloneqq\{a\in\Z\mid m\le a\le n\}$. 
We call such a subset of $\Z$ an \emph{interval} in $\Z$, or simply an interval. 
In particular, the empty set is an interval since it is written as $\emptyset=[m,n]$ with $m>n$. 
The empty set in a general context is denoted by $\emptyset$, while the empty \emph{index} is denoted by $\varnothing$. 

\section{MSW formula for multiple star harmonic sums}

Recall that, for an index $\bk=(k_1,\ldots,k_r)$ and a positive integer $N$, 
the multiple star harmonic sum is defined by 
\[\zeta^\star_{<N}(\bk)\coloneqq 
\sum_{0<m_1\le\cdots\le m_r<N}\frac{1}{m_1^{k_1}\cdots m_r^{k_r}}. \]
We also define the $\flat$-sum 
\begin{equation}\label{eq:star flat sum}
\zeta_{<N}^{\star\flat}(\bk)\coloneqq 
\sum_{\substack{0<n_{i1}\le\cdots\le n_{ik_i}<N\,(1\le i\le r)\\
n_{(i-1)1}\le n_{ik_i}\,(2\le i\le r)}}
\prod_{i=1}^r\frac{1}{(N-n_{i1})n_{i2}\cdots n_{ik_i}}. 
\end{equation}
For example, 
\[\zeta_{<N}^{\star\flat}(2,1,3)
=\sum_{n_1\le n_2\le n_3\ge n_4\ge n_5\le n_6}
\frac{1}{(N-n_1)n_2n_3\cdot (N-n_4)\cdot (N-n_5)n_6}\]
(here, we omit from the notation the condition $0<n_i<N$ for the running variables $n_i$). 
As before, we set $\zeta^\star_{<N}(\varnothing)=\zeta_{<N}^{\star\flat}(\varnothing)=1$. 

It is well known that the multiple harmonic and star harmonic sums satisfy 
the ``antipode identity'': For $r>0$, 
\begin{equation}\label{eq:antipode}
\sum_{i=0}^r(-1)^i\zeta_{<N}(k_1,\ldots,k_i)\cdot\zeta_{<N}^\star(k_r,\ldots,k_{i+1})=0. 
\end{equation}
Notice that the $\flat$-version 
\begin{equation}\label{eq:antipode flat}
\sum_{i=0}^r(-1)^i\zeta_{<N}^\flat(k_1,\ldots,k_i)\cdot\zeta_{<N}^{\star\flat}(k_r,\ldots,k_{i+1})=0
\end{equation}
also holds for $r>0$. In fact, for each $i$, we have 
\[\zeta_{<N}^\flat(k_1,\ldots,k_i)\cdot\zeta_{<N}^{\star\flat}(k_r,\ldots,k_{i+1})
=\sum_{\substack{0<n_{j1}\le\cdots\le n_{jk_j}<N\,(1\le j\le r)\\
n_{jk_j}<n_{(j+1)1}\,(1\le j<i)\\
n_{jk_j}\ge n_{(j+1)1}\,(i<j<r)}}
\prod_{j=1}^r\frac{1}{(N-n_{j1})n_{j2}\cdots n_{jk_j}}. \]
If we decompose the latter sum into two parts according to whether $n_{ik_i}<n_{(i+1)1}$ 
or $n_{ik_i}\ge n_{(i+1)1}$, 
the left-hand side of \eqref{eq:antipode flat} becomes a telescopic sum and the equality follows. 

The analogue of \Cref{thm:MSW} for the multiple star harmonic sum is the following: 

\begin{thm}\label{thm:MSW star}
For any index $\bk$ and any integer $N>0$, we have 
\begin{equation}\label{eq:MSW star}
\zeta_{<N}^\star(\bk)=\zeta_{<N}^{\star\flat}(\bk). 
\end{equation}
\end{thm}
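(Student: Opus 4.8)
The plan is to prove \eqref{eq:MSW star} by induction on the length $r$ of the index $\bk$, using the two antipode identities \eqref{eq:antipode} and \eqref{eq:antipode flat} together with the non-star MSW formula \eqref{eq:MSW}. The base case $r=0$ is the convention $\zeta_{<N}^\star(\varnothing)=\zeta_{<N}^{\star\flat}(\varnothing)=1$, so I would assume $r>0$ and that \eqref{eq:MSW star} is already known for every index of length strictly less than $r$.

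First I would solve each antipode identity for its $i=0$ term, which is the term carrying the full (reversed) index. From \eqref{eq:antipode} this gives
\begin{equation*}
\zeta_{<N}^\star(k_r,\ldots,k_1)
=-\sum_{i=1}^r(-1)^i\,\zeta_{<N}(k_1,\ldots,k_i)\cdot\zeta_{<N}^\star(k_r,\ldots,k_{i+1}),
\end{equation*}
and from \eqref{eq:antipode flat} the identical expression with every $\zeta_{<N}$ replaced by $\zeta_{<N}^\flat$ and every $\zeta_{<N}^\star$ by $\zeta_{<N}^{\star\flat}$. Comparing the two right-hand sides term by term, for each $i$ with $1\le i\le r$ the factor $\zeta_{<N}(k_1,\ldots,k_i)$ equals $\zeta_{<N}^\flat(k_1,\ldots,k_i)$ by \Cref{thm:MSW}, while the factor $\zeta_{<N}^\star(k_r,\ldots,k_{i+1})$ is a star harmonic sum of length $r-i<r$ and so equals $\zeta_{<N}^{\star\flat}(k_r,\ldots,k_{i+1})$ by the induction hypothesis (the case $i=r$ reducing to $\zeta_{<N}^\star(\varnothing)=\zeta_{<N}^{\star\flat}(\varnothing)$). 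Hence the two sums coincide, which yields $\zeta_{<N}^\star(k_r,\ldots,k_1)=\zeta_{<N}^{\star\flat}(k_r,\ldots,k_1)$; since reversal is a bijection on the set of indices of length $r$, this establishes \eqref{eq:MSW star} for all such indices and closes the induction.

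Since the non-star MSW formula, the antipode identity, and its $\flat$-version are all already in hand, I expect no genuine analytic difficulty here; the only point requiring care is organizational. One must run the induction on the \emph{reversed} index, so that the length-$r$ star value appears precisely as the isolated $i=0$ term of the antipode relation while every remaining star factor has strictly smaller length. This is exactly what makes \Cref{thm:MSW} apply to the non-star factors and the induction hypothesis apply to the star factors, so that the recursion closes. I would also remark that reading the same argument with the roles of $\{\zeta_{<N},\zeta_{<N}^\flat\}$ and $\{\zeta_{<N}^\star,\zeta_{<N}^{\star\flat}\}$ interchanged recovers \eqref{eq:MSW} from \eqref{eq:MSW star}, giving the promised converse deduction.
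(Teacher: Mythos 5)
Your proposal is correct and follows essentially the same route as the paper: induction on the depth, isolating the $i=0$ term of the antipode identities \eqref{eq:antipode} and \eqref{eq:antipode flat}, and matching the remaining terms via \Cref{thm:MSW} and the induction hypothesis. Your explicit remark that the induction runs on the reversed index (and that reversal is a bijection on indices of fixed length) is a point the paper leaves implicit, but the argument is the same.
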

\begin{proof}
This follows from \Cref{thm:MSW} by induction on the depth (i.e., the length) $r$ of 
the index $\bk$. 
The formula trivially holds for $\bk=\varnothing$. 
When $r>0$, we have two identities 
\begin{align*}
\zeta_{<N}^\star(k_r,\ldots,k_1)&=
-\sum_{i=1}^r(-1)^i\zeta_{<N}(k_1,\ldots,k_i)\cdot\zeta_{<N}^\star(k_r,\ldots,k_{i+1}),\\
\zeta_{<N}^{\star\flat}(k_r,\ldots,k_1)&=
-\sum_{i=1}^r(-1)^i\zeta_{<N}^{\flat}(k_1,\ldots,k_i)\cdot\zeta_{<N}^{\star\flat}(k_r,\ldots,k_{i+1})
\end{align*}
by \eqref{eq:antipode} and \eqref{eq:antipode flat}, and 
they are equal by \eqref{eq:MSW} and the induction hypothesis. 
\end{proof}

Conversely, \Cref{thm:MSW} can be deduced from \Cref{thm:MSW star} in the same way. 
In other words, Theorems \ref{thm:MSW} and \ref{thm:MSW star} are equivalent 
once the identities \eqref{eq:antipode} and \eqref{eq:antipode flat} are provided.

\section{Schur multiple harmonic sums}

Following Nakasuji--Phuksuwan--Yamasaki \cite{NPY}, we define the (skew-)Schur multiple harmonic sum by 
\begin{equation}\label{eq:Schur MHS}
\zeta_{<N}(\bk)\coloneqq 
\sum_{(m_{ij})\in\SSYT_{<N}(D)}\prod_{(i,j)\in D}\frac{1}{m_{ij}^{k_{ij}}}. 
\end{equation}
Here $\bk$ is an index on a skew Young diagram $D$ 
(that is, $\bk=(k_{ij})$ is a tuple of positive integers indexed by $(i,j)\in D$), 
and $\SSYT_{<N}(D)$ denotes the set of semi-standard Young tableaux on $D$ 
whose entries are positive integers less than $N$. 
For example, 
\[\zeta_{<N}\left(\ytableausetup{boxsize=1.5em}
\begin{ytableau}
k_{11} & k_{12} & k_{13}\\
k_{21} & k_{22} 
\end{ytableau}\right)
=\sum_{\setlength{\arraycolsep}{1pt}\scriptsize
\begin{array}{ccccc}m_{11}&\le&m_{12}&\le&m_{13}\\ 
\wedge && \wedge \\
m_{21}&\le&m_{22}\end{array}}
\frac{1}{m_{11}^{k_{11}}m_{12}^{k_{12}}m_{13}^{k_{13}}
m_{21}^{k_{21}}m_{22}^{k_{22}}}, \]
where we omit the condition $0<m_{ij}<N$ on running variables $m_{ij}$. 
Note that this generalizes both multiple harmonic and star harmonic sums in the sense that 
\begin{equation}\label{eq:special}
\zeta_{<N}\left(\ytableausetup{boxsize=1.5em}
\begin{ytableau}
k_1 \\
\vdots\\
k_r
\end{ytableau}\right)=\zeta_{<N}(k_1,\ldots,k_r), \qquad 
\zeta_{<N}\left(\ytableausetup{boxsize=1.5em}
\begin{ytableau}
k_1 & \cdots & k_r
\end{ytableau}\right)=\zeta_{<N}^\star(k_1,\ldots,k_r).
\end{equation}

In what follows, we assume that the index $\bk$ is \emph{diagonally constant}, 
i.e., $k_{ij}$ depends only on $i-j$. 
Hirose--Murahara--Onozuka \cite{HMO} assumed this condition 
to express the Schur multiple zeta value as the integral associated with a $2$-poset. 
Since our purpose in this section is to provide a finite sum analogue of their integral expression, 
it is natural to make the same assumption. 

We begin with preparing the following notation. 
For a diagonally constant index $\bk$ on $D$, we set 
\begin{align*}
&D_p\coloneqq \{(i,j)\in D\mid i-j=p\}, \\
&p_0\coloneqq \min\{p\in\Z\mid D_p\ne\emptyset\},\qquad p_1\coloneqq \max\{p\in\Z\mid D_p\ne\emptyset\}, \\
&k_p\coloneqq k_{ij} \text{ for any $(i,j)\in D_p$}. 
\end{align*}
As a matter of convention, we set $k_p=1$ for $p\in\Z$ with $D_p=\emptyset$. 
For each $p\in\Z$, there is a bijection 
\[D_p\longrightarrow J_p\coloneqq\{j\in\Z\mid (j+p,j)\in D_p\};\ (i,j)\longmapsto j. \]
Note that $J_p$ is an interval in $\Z$. Moreover, each pair $(J_p,J_{p+1})$ of intervals must satisfy the following condition. 

\begin{defn}
We say a pair $(J,J')$ of intervals in $\Z$ is \emph{consecutive} if $J=[j_0,j_1]$ and 
$J'=J$, $J\setminus \{j_1\}$, $J\cup\{j_0-1\}$ or $J\cup\{j_0-1\}\setminus \{j_1\}$. 
When $J$ is empty, this means that $J'$ is either empty or a singleton $\{j\}$ 
(in the latter case, we can take $(j_0,j_1)=(j+1,j)$). 
\end{defn}

For a tuple $\bm=(m_j)\in[1,N-1]^J$ of integers in $[1,N-1]$ indexed by an interval $J$, 
we set $\Pi(\bm)\coloneqq\prod_{j\in J}m_j$. 
Then the definition \eqref{eq:Schur MHS} of Schur multiple harmonic sum 
(for a diagonally constant index) is written as 
\[\zeta_{<N}(\bk)=\sum_{\substack{\bm_p\in[1,N-1]^{J_p}\\
(p_0\le p\le p_1)\\
\bm_{p_0}\vartriangleleft \cdots\vartriangleleft \bm_{p_1}}}
\prod_{p=p_0}^{p_1}\frac{1}{\Pi(\bm_{p})^{k_p}},\]
where $\bm_p$ runs over $[1,N-1]^{J_p}$ for each $p=p_0,\ldots, p_1$, satisfying the relation 
$\bm_{p_0}\vartriangleleft \cdots\vartriangleleft \bm_{p_1}$ defined as follows. 

\begin{defn}
Let $(J,J')$ be a consecutive pair of intervals. 
Let $\bm=(m_j)_{j\in J}$ and $\bn=(n_{j})_{j\in J'}$ be tuples of integers 
indexed by $J$ and $J'$ respectively. Then we define the relation 
\[\bm\vartriangleleft \bn\coloniff 
\begin{cases} 
m_j<n_j &\text{ if $j\in J$ and $j\in J'$,}\\
n_{j-1}\le m_j &\text{ if $j\in J$ and $j-1\in J'$}. 
\end{cases}\]
For later use, we also define 
\[\bm\trianglelefteq \bn\coloniff 
\begin{cases}
m_j\le n_j &\text{ if $j\in J$ and $j\in J'$,}\\
n_{j-1}<m_j &\text{ if $j\in J$ and $j-1\in J'$}. 
\end{cases}\]
\end{defn}

\begin{rem}\label{rem:n-1}
Notice that the relation $\trianglelefteq$ does \emph{not} mean ``$\vartriangleleft$ or equal''. 
Nevertheless, we have 
\begin{equation}\label{eq:n-1}
\bm \vartriangleleft \bn \iff \bm \trianglelefteq \bn-\bone
\end{equation}
where $\bn-\bone=(n_j-1)_{j\in J'}$. 
\end{rem}

\begin{ex}
If $J=J'=\{j\}$ (a singleton), then 
\[\bm\vartriangleleft \bn \iff m_j<n_j,\qquad 
\bm\trianglelefteq \bn \iff m_j\le n_j\]
holds. On the other hand, $J=\{j\}$ and $J'=\{j-1\}$, we have 
\[\bm\vartriangleleft \bn \iff n_{j-1}\le m_j,\qquad 
\bm\trianglelefteq \bn \iff n_{j-1}<m_j. \]
\end{ex}

Now we define the $\flat$-sum by using the above notation. 

\begin{defn}
For a diagonally constant index $\bk$ as above and an integer $N>0$, we set 
\[\zeta_{<N}^{\flat}(\bk)\coloneqq 
\sum_{\substack{\bn^{(l)}_p\in [1,N-1]^{J_p}\\
\bn^{(l)}_p\trianglelefteq \bn^{(l+1)}_p\;(1\le l<k_p)\\
\bn^{(k_p)}_p\vartriangleleft \bn^{(1)}_{p+1}\;(p_0\le p<p_1)}}
\prod_{p=p_0}^{p_1}\frac{1}{\Pi(\bN-\bn^{(1)}_p)\Pi(\bn^{(2)}_p)\cdots\Pi(\bn^{(k_p)}_p)}, \]
where $\bn^{(l)}_p$ runs over $[1,N-1]^{J_p}$ for each $p=p_0,\ldots,p_1$ and $l=1,\ldots,k_p$, 
satisfying the indicated relatioins. 
Moreover, for any tuple $\bn=(n_j)_{j\in J}\in[1,N-1]^J$, we set $\bN-\bn\coloneqq(N-n_j)_{j\in J}$. 
\end{defn}

Again, this generalizes both \eqref{eq:flat sum} and \eqref{eq:star flat sum}, that is, we have 
\begin{equation}\label{eq:special flat}
\zeta_{<N}^\flat\left(\ytableausetup{boxsize=1.5em}
\begin{ytableau}
k_1 \\
\vdots\\
k_r
\end{ytableau}\right)=\zeta_{<N}^\flat(k_1,\ldots,k_r), \qquad 
\zeta_{<N}^\flat\left(\ytableausetup{boxsize=1.5em}
\begin{ytableau}
k_1 & \cdots & k_r
\end{ytableau}\right)=\zeta_{<N}^{\star\flat}(k_1,\ldots,k_r). 
\end{equation}

Now we state the main theorem of this section. 

\begin{thm}\label{thm:MSW Schur}
For any diagonally constant index $\bk$ and any integer $N>0$, we have 
\begin{equation}\label{eq:MSW Schur}
\zeta_{<N}(\bk)=\zeta_{<N}^\flat(\bk). 
\end{equation}
\end{thm}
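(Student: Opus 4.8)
The plan is to adapt the \emph{connector method} used for \eqref{eq:MSW} in \cite{MSW} to the interval-indexed diagonals of $D$. The two ingredients I would isolate first are purely one-dimensional, i.e.\ about a single strand: a \emph{connector} $C(m,n)$, defined for $0<m\le n<N$, and a \emph{transport relation} that rewrites one harmonic factor $1/m^{k}$ as the $k$-fold $\flat$-block $1/\bigl((N-n^{(1)})n^{(2)}\cdots n^{(k)}\bigr)$ summed over $n^{(1)}\le\cdots\le n^{(k)}$, while moving the connector from the harmonic index $m$ to the first $\flat$-index $n^{(1)}$. As in \cite{MSW}, this $k$-fold block is produced by iterating a basic two-variable connector identity $k$ times; the successive weak inequalities among the $n^{(l)}$ are exactly the relation $\trianglelefteq$, and I would use \Cref{rem:n-1} to pass freely between $\vartriangleleft$ and $\trianglelefteq$ via the shift $\bn\mapsto\bn-\bone$. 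I would also record the two boundary values of $C$ that collapse a connector sitting against an empty interval.

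Next I would glue these one-dimensional pieces along each diagonal. To the interface between consecutive diagonals $D_p$ and $D_{p+1}$ I attach the product connector $\mathcal{C}_p\coloneqq\prod_j C(m_j,n_j)$, where $j$ ranges over the strands common to $J_p$ and $J_{p+1}$, the strands created or destroyed at the interface being handled by the boundary values of $C$. Using $\mathcal{C}$ I then define interpolating sums $Z_q$ for $p_0-1\le q\le p_1$: in $Z_q$ the diagonals $D_{p_0},\ldots,D_q$ appear in $\flat$-resolved form (tuples $\bn^{(l)}_p$ with the relations $\trianglelefteq$ and $\vartriangleleft$ of the definition of $\zeta^\flat_{<N}$), the diagonals $D_{q+1},\ldots,D_{p_1}$ appear in harmonic form (tuples $\bm_p$ with $\vartriangleleft$), and the two blocks are joined by $\mathcal{C}_q$ across the interface $(J_q,J_{q+1})$. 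Since the extreme intervals $J_{p_0-1}$ and $J_{p_1+1}$ are empty, the boundary values of $C$ should identify $Z_{p_0-1}=\zeta_{<N}(\bk)$ and $Z_{p_1}=\zeta^\flat_{<N}(\bk)$.

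The core is the transport step $Z_{q-1}=Z_q$, which converts the single harmonic diagonal $D_q$ into its $\flat$-block and slides the connector from the interface $(J_{q-1},J_q)$ to $(J_q,J_{q+1})$. On the strands common to these two interfaces this is the single-strand transport relation applied factor by factor. The step I expect to be the main obstacle is the bookkeeping on the strands that are \emph{not} common to both interfaces --- the boxes that appear or disappear when the diagonal shape changes, corresponding to the four cases in the definition of a consecutive pair. A strand that is born at $D_q$ or dies at $D_q$ must be correctly initialized or summed out by a boundary value of $C$, and one must check that the product $\mathcal{C}$ still telescopes across these points, so that the factorwise relation assembles into the genuine identity $Z_{q-1}=Z_q$ rather than a mere strand-by-strand coincidence. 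Concretely, the difficulty is to verify that the inequalities packaged in $\vartriangleleft$ and $\trianglelefteq$ (strict or weak on the aligned strands and reversed on the shifted strands) are precisely the ones generated by the transport at these interfaces. Once this is settled, the telescoping $Z_{p_0-1}=Z_{p_0}=\cdots=Z_{p_1}$ yields \eqref{eq:MSW Schur}.
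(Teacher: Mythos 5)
Your overall architecture---a scalar connector $C_N(m,n)$ with transport identities, interpolating connected sums that are harmonic on one side of an interface and $\flat$-resolved on the other, and a diagonal-by-diagonal telescoping---is exactly the skeleton of the paper's proof (the paper runs it in the mirror direction, flattening from $q=p_1$ downward, but that by itself is a secondary difference). The genuine gap is the object you put at the interface: the connector joining $\bm=(m_j)_{j\in J}$ to $\bn=(n_j)_{j\in J'}$ \emph{cannot} be the product $\prod_j C_N(m_j,n_j)$ over common strands, and this is not a bookkeeping issue confined to strands that are born or die at shape changes (the point you flagged as the main obstacle). The relations $\trianglelefteq$ and $\vartriangleleft$ couple \emph{adjacent} strands: already for $J=J'=\{1,2\}$ the condition $\bb\trianglelefteq\bn$ reads $b_1\le n_1<b_2\le n_2$, so transporting one harmonic factor strand by strand, via \Cref{lem:C_N}~(1), gives
\[
\sum_{\substack{\bb\in[1,N-1]^J\\ \bb\trianglelefteq\bn}}C_N(m_1,b_1)\,C_N(m_2,b_2)\,\frac{1}{b_1b_2}
=\frac{1}{m_1}C_N(m_1,n_1)\cdot\frac{1}{m_2}\bigl(C_N(m_2,n_2)-C_N(m_2,n_1)\bigr),
\]
which is \emph{not} $\frac{1}{m_1m_2}C_N(m_1,n_1)C_N(m_2,n_2)$: the cross-strand inequality $n_1<b_2$ produces a difference of connectors, so the product ansatz does not reproduce itself and your core step $Z_{q-1}=Z_q$ has no correct formulation. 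This failure occurs on aligned strands as soon as some diagonal $D_p$ contains at least two boxes (e.g.\ the $2\times 2$ square diagram); the product connector suffices precisely when every $J_p$ is a singleton, i.e.\ for one-column or one-row diagrams, which recovers Theorems \ref{thm:MSW} and \ref{thm:MSW star} but no genuinely Schur case.

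The missing idea---the one the paper explicitly singles out as its new ingredient---is to take the connector to be a \emph{determinant},
$D_N(\bm,\bn)\coloneqq\det\bigl(C_N(\tm_{j_1},\tn_{j_2})\bigr)_{j_1,j_2\in J\cup J'}$,
with the padding conventions $\tm_j=0$ for $j\notin J$ and $\tn_j=N-1$ for $j\notin J'$; the boundary values $C_N(0,n)=1=C_N(m,N-1)$ then handle the created and destroyed strands (\Cref{lem:D_N empty}), which is the role you wanted the ``boundary values of $C$'' to play. The determinant repairs exactly the defect exhibited above: subtracting adjacent columns leaves it unchanged, so $\det(C_{j_1j_2})=\det\bigl(C_{j_1j_2}-C_{j_1(j_2-1)}\bigr)$, and multilinearity in columns then expands the determinant of column sums into $\sum_{\bb\trianglelefteq\bn}D_N(\bm,\bb)/\Pi(\bb)$---the differences forced by the coupled inequalities are absorbed into elementary column (for \eqref{eq:D_N 1}) resp.\ row (for \eqref{eq:D_N 2}) operations. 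These two determinant transport identities are \Cref{lem:D_N}, and with them the telescoping you describe goes through, with \Cref{rem:n-1} entering where you predicted; note also that your bottom-up orientation would require transposed variants of \Cref{lem:C_N}, whereas the paper's top-down order uses that lemma as stated. Without the determinant (or an equivalent alternating-sum device), the proposal does not prove \eqref{eq:MSW Schur}.
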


\begin{rem}
In view of identities \eqref{eq:special} and \eqref{eq:special flat}, 
\Cref{thm:MSW Schur} includes both Theorems \ref{thm:MSW} and \ref{thm:MSW star}. 
More generally, if the diagram $D$ is of anti-hook type, 
i.e., $D$ has the shape 
\[\ytableausetup{boxsize=5mm}\begin{ytableau}
\none & \none & \\
\none & \none & {\vdots} \\
 & {\cdots} & 
\end{ytableau}\]  
then the formula \eqref{eq:MSW Schur} is the finite sum analogue of the integral-series identity 
\cite[Theorem 4.1]{KY}. 
\end{rem}

We prove \Cref{thm:MSW Schur} by the method of \emph{connected sums}. 
Though this proof may look quite complicated, it is indeed a natural extension of the proof 
of  \Cref{thm:MSW} given in \cite{MSW}. 
The new ingredient is a trick using certain determinants. 
It is worthy of attention that a similar use of determinants appears in \cite{HMO}. 

For $0\le m,n\le N-1$, set 
\[C_N(m,n)\coloneqq \binom{n}{m}\biggm/\binom{N-1}{m}. \]
By definition, we have 
\begin{alignat*}{2}
C_N(0,n)&=1=C_N(m,N-1) &\quad & \text{for $0\le m,n\le N-1$}, \\
C_N(m,n)&=0 & & \text{for $0\le n<m\le N-1$}. 
\end{alignat*}

\begin{lem}\label{lem:C_N}
\begin{enumerate}
\item For $0<m<N$ and $0\le n\le n'<N$, 
\[\frac{1}{m}\bigl(C_N(m,n')-C_N(m,n)\bigr)=\sum_{b=n+1}^{n'} C_N(m,b)\frac{1}{b}. \]
\item For $0\le m\le m'<N$ and $0<n<N$,   
\[\sum_{a=m+1}^{m'}C_N(a,n)\frac{1}{n}=\bigl(C_N(m,n-1)-C_N(m',n-1)\bigr)\frac{1}{N-n}. \]
\end{enumerate}
\end{lem}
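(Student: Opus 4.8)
The two identities in Lemma~\ref{lem:C_N} are binomial-coefficient recurrences, so the plan is to reduce each to the standard Pascal relation. First I would record the key elementary identity
\[
C_N(m,b)-C_N(m,b-1)
=\frac{\binom{b}{m}-\binom{b-1}{m}}{\binom{N-1}{m}}
=\frac{\binom{b-1}{m-1}}{\binom{N-1}{m}}
=\frac{m}{b}\,C_N(m,b),
\]
using Pascal's rule $\binom{b}{m}=\binom{b-1}{m}+\binom{b-1}{m-1}$ in the numerator and then $\binom{b-1}{m-1}=\frac{m}{b}\binom{b}{m}$ in the last step. This single computation is essentially the whole content; everything else is telescoping.

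For part (1), I would rewrite the right-hand side using the identity above as
\[
\sum_{b=n+1}^{n'}C_N(m,b)\frac{1}{b}
=\frac{1}{m}\sum_{b=n+1}^{n'}\bigl(C_N(m,b)-C_N(m,b-1)\bigr)
=\frac{1}{m}\bigl(C_N(m,n')-C_N(m,n)\bigr),
\]
where the middle sum telescopes. The hypothesis $0<m$ is exactly what is needed to divide by $m$, and $n\le n'$ guarantees the sum is taken over a (possibly empty) range in the correct direction.

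For part (2), I expect the parallel identity to involve the upper index $m$ rather than the argument $n$, so I would derive the companion relation
\[
C_N(a,n)\frac{1}{n}
=\bigl(C_N(a-1,n-1)-C_N(a,n-1)\bigr)\frac{1}{N-n},
\]
which should follow from expressing $C_N(a,n)$ via $\binom{n}{a}=\binom{n-1}{a-1}+\binom{n-1}{a}$ together with the normalization $\binom{N-1}{a}$, converting the factor $1/n$ into $1/(N-n)$ through the ratio $\binom{n-1}{a-1}/\binom{n}{a}=a/n$ and a matching manipulation of the denominators $\binom{N-1}{a}$ against $\binom{N-1}{a-1}$. Summing over $a$ from $m+1$ to $m'$ then telescopes to
\[
\sum_{a=m+1}^{m'}C_N(a,n)\frac{1}{n}
=\bigl(C_N(m,n-1)-C_N(m',n-1)\bigr)\frac{1}{N-n}.
\]

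The main obstacle will be locating the correct companion identity for part~(2): unlike part~(1), the telescoping is in the \emph{upper} index of the binomial coefficient, and the replacement of $1/n$ by $1/(N-n)$ requires carefully tracking how the normalizing factor $\binom{N-1}{a}$ transforms under $a\mapsto a-1$. Concretely, the relation $(N-a)\binom{N-1}{a}=(N-1-a)\binom{N-1}{a}$ type bookkeeping, combined with $n\binom{n}{a}=(n-a+1)\binom{n}{a-1}$, must be arranged so that the factor $N-n$ emerges cleanly; verifying that these algebraic rearrangements are valid for all $a$ in the summation range (including the boundary behaviour when $a=N-1$ or $n=N-1$, where some $C_N$ values equal~$1$) is where the care is needed.
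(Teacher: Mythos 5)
Your argument is correct and is essentially the paper's own proof: both parts reduce to the single-step identities $\frac{1}{m}\bigl(C_N(m,b)-C_N(m,b-1)\bigr)=C_N(m,b)\frac{1}{b}$ and $C_N(a,n)\frac{1}{n}=\bigl(C_N(a-1,n-1)-C_N(a,n-1)\bigr)\frac{1}{N-n}$ followed by telescoping, and you in fact supply more detail than the paper, which simply asserts these two identities as immediate. The only blemishes are in your final paragraph, where two auxiliary binomial relations are miswritten (they should read $a\binom{n}{a}=(n-a+1)\binom{n}{a-1}$ and $a\binom{N-1}{a}=(N-a)\binom{N-1}{a-1}$); these play no role in your actual chain of reasoning, and the companion identity you state for part (2) is true and checks out by a routine factorial computation.
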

\begin{proof}
These are immediate consequences of the identities 
\[\frac{1}{m}\bigl(C_N(m,b)-C_N(m,b-1)\bigr)=C_N(m,b)\frac{1}{b}\]
and 
\[C_N(a,n)\frac{1}{n}=\bigl(C_N(a-1,n-1)-C_N(a,n-1)\bigr)\frac{1}{N-n},\]
respectively. 
\end{proof}

\begin{rem}
The relation (1) of the above lemma is the same as (4.1) of \cite[Lemma 4.1]{MSW}, 
though their symbol $C_N(n,m)$ corresponds to our $C_{N+1}(n,m)$. 
We also note that (4.2) of \cite[Lemma 4.1]{MSW} is obtained by 
combining (1) and (2) of our lemma. 
\end{rem}

Next we extend \Cref{lem:C_N} to certain determinants. 

\begin{defn}
Let $(J,J')$ be a pair of consecutive intervals, and 
$\bm=(m_j)_{j\in J}$ and $\bn=(n_{j'})_{j'\in J'}$ be tuples of elements in $[1,N-1]$ indexed by them. 
We set $\tJ=J\cup J'$ and define the symbols $\tm_j$ and $\tn_j$ for $j\in\tJ$ by 
\[\tm_j=\begin{cases}m_j & (j\in J),\\ 0 & (j\notin J),\end{cases} \qquad 
\tn_j=\begin{cases}n_j & (j\in J'),\\ N-1 & (j\notin J').\end{cases}\]
Then we define 
\[D_N(\bm,\bn)\coloneqq \det\bigl(C_N(\tm_{j_1},\tn_{j_2})\bigr)_{j_1,j_2\in\tJ}. \]
\end{defn}

\begin{lem}\label{lem:D_N empty}
When $J$ or $J'$ is empty, one has $D_N(\bm,\bn)=1$. 
\end{lem}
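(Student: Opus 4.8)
The plan is to reduce everything to the observation that, under the hypothesis that $J$ or $J'$ is empty, the consecutiveness of the pair $(J,J')$ forces the index set $\tJ=J\cup J'$ to contain at most one element, so that $D_N(\bm,\bn)$ is either an empty determinant or a single entry of the matrix $\bigl(C_N(\tm_{j_1},\tn_{j_2})\bigr)$.

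First I would record this size constraint. If $J=\emptyset$, the definition of a consecutive pair says directly that $J'$ is either empty or a singleton, so $\tJ=J'$ has at most one element. If instead $J'=\emptyset$, I would run through the four shapes allowed for $J'$ in the definition: both $J'=J\cup\{j_0-1\}$ and $J'=J\cup\{j_0-1\}\setminus\{j_1\}$ contain $j_0-1$ and hence are nonempty, so $J'=\emptyset$ can only arise from $J'=J$ (forcing $J=\emptyset$) or from $J'=J\setminus\{j_1\}$ (forcing $J=\{j_1\}$ to be a singleton). In either subcase $\tJ=J$ again has at most one element.

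With this size constraint in hand, I would finish by inspection. If $\tJ=\emptyset$, then $D_N(\bm,\bn)$ is the determinant of the empty matrix, which is $1$ by convention. If $\tJ=\{j\}$ is a singleton, then $D_N(\bm,\bn)$ equals the single entry $C_N(\tm_j,\tn_j)$, which I would evaluate using the boundary values recorded just before the lemma: when $J=\emptyset$ we have $j\notin J$, so $\tm_j=0$ and $C_N(0,\tn_j)=1$; when $J'=\emptyset$ we have $j\notin J'$, so $\tn_j=N-1$ and $C_N(\tm_j,N-1)=1$. In both cases the determinant equals $1$.

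I do not expect a genuine obstacle here; the only step requiring any care is the case analysis showing that emptiness of $J'$ propagates back to force $J$ to be empty or a singleton. This matters precisely because every entry of the matrix is in fact equal to $1$ once $J$ or $J'$ is empty (all $\tm_{j_1}=0$ in the first case, all $\tn_{j_2}=N-1$ in the second), and such an all-ones matrix would have determinant $0$ rather than $1$ as soon as $\tJ$ had two or more elements. It is therefore the consecutiveness hypothesis, and not the values of $C_N$ alone, that makes the statement true.
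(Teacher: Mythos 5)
Your proof is correct and follows essentially the same route as the paper's: use consecutiveness to reduce to the case where $\tJ$ has at most one element, then evaluate the single entry via the boundary values $C_N(0,\cdot)=1=C_N(\cdot,N-1)$. If anything, you are slightly more thorough than the paper, which treats only the cases ``$J=\emptyset$, $J'$ a singleton'' and ``$J'=\emptyset$, $J$ a singleton'' without spelling out (as you do) that consecutiveness rules out $J'=\emptyset$ with $|J|\ge 2$.
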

\begin{proof}
This is trivial when $J=J'=\emptyset$. 
If $J$ is empty and $J'$ is not, then $J'$ has to be a singleton. 
Hence $\bn$ is represented by a single element $n\in [1,N-1]$ and it holds that 
\[D_N(\bm,\bn)=C_N(0,n)=1. \]
Similarly, if $J'$ is empty and $J$ is a singleton, then $\bm$ is represented by $m\in[1,N-1]$ and it holds that 
\[D_N(\bm,\bn)=C_N(m,N-1)=1. \qedhere\]
\end{proof}

We say a tuple $\bm=(m_j)_{j\in J}$ of integers indexed by an interval $J$ is \emph{non-decreasing} 
if $m_j\le m_{j+1}$ holds whenever $j,j+1\in J$. 

\begin{lem}\label{lem:D_N}
Let $(J,J')$ be a consecutive pair of intervals. 
\begin{enumerate}
\item If $\bm\in[1,N-1]^{J}$ and $\bn\in[0,N-1]^{J'}$ are non-decreasing, we have 
\begin{equation}\label{eq:D_N 1}
\frac{1}{\Pi(\bm)}D_N(\bm,\bn)
=\sum_{\substack{\bb\in[1,N-1]^J\\ \bb\trianglelefteq\bn}} D_N(\bm,\bb)\frac{1}{\Pi(\bb)}. 
\end{equation}
\item If $\bm\in[0,N-1]^J$ and $\bn\in[1,N-1]^{J'}$ are non-decreasing, we have 
\begin{equation}\label{eq:D_N 2}
\sum_{\substack{\ba\in[1,N-1]^{J'}\\ \bm\vartriangleleft\ba}}D_N(\ba,\bn)\frac{1}{\Pi(\bn)}
=D_N(\bm,\bn-\bone)\frac{1}{\Pi(\bN-\bn)}. 
\end{equation}
\end{enumerate}
\end{lem}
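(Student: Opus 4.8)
The plan is to reduce both identities to the scalar \Cref{lem:C_N} by performing elementary row and column operations on $D_N$ and then invoking multilinearity of the determinant. The two parts are mirror images of one another: part (1) manipulates the $\bn$-columns and uses \Cref{lem:C_N}(1), while part (2) manipulates the $\ba$-rows and uses \Cref{lem:C_N}(2). Throughout, the key structural features are that $C_N(0,n)=C_N(m,N-1)=1$ (so that a padding $\tm=0$ produces an all-ones row and a padding $\tn=N-1$ produces an all-ones column) and that $C_N(N-1,n-1)=0$ for $n<N$ (which kills boundary terms).

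For part (1), I would first apply the column operations replacing each column $j_2$ of $D_N(\bm,\bn)$ by its difference with the adjacent column $j_2-1$, which leaves the determinant unchanged. Scaling each row $j_1\in J$ by $1/m_{j_1}$ (legitimate since $m_{j_1}\ge1$), every entry becomes $\frac1m C_N(m,n_{j_2})$ or $\frac1m\bigl(C_N(m,n_{j_2})-C_N(m,n_{j_2-1})\bigr)$, which \Cref{lem:C_N}(1) rewrites as $\sum_b C_N(m,b)/b$ over the interval $b\in[1,n_{j_2}]$ or $b\in[n_{j_2-1}+1,n_{j_2}]$. Expanding the resulting determinant by multilinearity in the columns yields $\sum_{\bb}D_N(\bm,\bb)/\Pi(\bb)$, and the crucial point is that the telescoping \emph{lower} limits $n_{j_2-1}+1$ reproduce precisely the conditions $n_{j-1}<b_j$ in $\bb\trianglelefteq\bn$ while the upper limits give $b_j\le n_j$. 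When $(J,J')$ forces a padding row $\tm_{j_0-1}=0$, that all-ones row becomes a standard basis vector after the column differences, so one first expands along it to descend to the $J\times J$ determinant, the discarded column $j_0-1$ still contributing its value $n_{j_0-1}$ as the lower limit for $b_{j_0}$.

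Part (2) is dual. I would scale each column $j_2$ by $1/n_{j_2}$ and evaluate the sum over $\ba$ subject to $\bm\vartriangleleft\ba$. The relation $\vartriangleleft$ confines each coordinate $a_j$ to an interval whose endpoints are neighbouring entries of $\bm$ (with the conventions $0$ and $N-1$ at the two ends of $J$), and these ranges are \emph{independent} across $j$, so the sum factorizes over the rows. Multilinearity in the rows together with \Cref{lem:C_N}(2), after factoring $\prod_{j_2}(N-n_{j_2})^{-1}=\Pi(\bN-\bn)^{-1}$ out of the columns, turns the result into the determinant obtained from $\bigl(C_N(\tm_{j_1},n_{j_2}-1)\bigr)_{j_1,j_2}$ by the row operations sending row $j_1$ to (row $j_1$) minus (row $j_1+1$) — the topmost row being left intact because the boundary contribution $C_N(N-1,\,\cdot\,)=0$ vanishes. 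Since these operations are invertible, and since the all-ones column of $D_N(\bm,\bn-\bone)$ that appears when $j_1\notin J'$ is exactly the one removed by the accompanying Laplace expansion, the determinant equals $D_N(\bm,\bn-\bone)$, which gives the claim.

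The main obstacle is the bookkeeping across the four types of consecutive pairs $(J,J')$: in each case one must check that the intervals produced by the telescoping coincide \emph{exactly} with $\trianglelefteq$ and $\vartriangleleft$ (including the correct behaviour at the two ends of $J$), and that the $\tJ\times\tJ$ determinant reduces to the stated $J$- or $J'$-sized determinant. The delicate point is verifying that the size change $|\tJ|-|J'|\in\{0,1\}$ is precisely accounted for by the presence or absence of the all-ones column (from $\tn=N-1$), with the all-ones row (from $\tm=0$) absorbed by a single Laplace expansion whenever $\tJ$ is strictly larger; once this is checked, everything else is routine linear algebra built on the scalar lemma.
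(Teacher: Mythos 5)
Your proposal takes essentially the same route as the paper's proof: adjacent column (resp.\ row) differences justified by $C_N(m,0)=0$ (resp.\ $C_N(N-1,n-1)=0$), entry-wise application of \Cref{lem:C_N}, multilinearity of the determinant to expand into the sum over $\bb$ (resp.\ to factorize the sum over $\ba$), and a Laplace expansion absorbing the all-ones padding row/column, with the telescoping limits matching $\trianglelefteq$ and $\vartriangleleft$ exactly as you describe. The only quibble is cosmetic: in part (2) the row left intact by the subtraction is the one adjacent to the convention $\tm=N-1$, i.e.\ the row of \emph{maximal} index in $J'$ (the bottom row in the paper's orientation), not the topmost.
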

\begin{proof}
First let us show \eqref{eq:D_N 1}. By shifting the numbering, we assume $J=[1,\oj]$. 
Put $C_{j_1j_2}=C_N(\tm_{j_1},\tn_{j_2})$ to simplify the notation. 
The proof is divided into two cases: (a) $0\notin J'$, (b) $0\in J'$. 

\begin{enumerate}
\item[(a)]
In this case, $\tJ=J=[1,\oj]$. If we set $\tn_0\coloneqq 0$, then we have 
\begin{align}
\notag
\frac{1}{\Pi(\bm)}D_N(\bm,\bn)
&=\det\biggl(\frac{1}{m_{j_1}}C_{j_1j_2}\biggr)_{j_1,j_2\in J}\\
\label{eq:D_N 1 proof}
&=\det\biggl(\frac{1}{m_{j_1}}\bigl(C_{j_1j_2}-C_{j_1(j_2-1)}\bigr)\biggr)_{j_1,j_2\in J}
\intertext{since $C_N(m,0)=0$ for any $m>0$. By \Cref{lem:C_N} (1), this is }
\notag
&=\det\Biggl(\sum_{\tn_{j_2-1}<b_{j_2}\le\tn_{j_2}}C_N(m_{j_1},b_{j_2})\frac{1}{b_{j_2}}\Biggr)_{j_1,j_2\in J}\\
\notag
&=\sum_{\substack{\bb\in[1,N-1]^J\\ \bb\trianglelefteq\bn}}D_N(\bm,\bb)\frac{1}{\Pi(\bb)} 
\end{align}
(note that the last equality holds regardless whether $J'=J$ or $J'\subsetneq J$, 
since in the latter case, the extra condition $b_{\oj}\le\tn_{\oj}=N-1$ holds automatically).
Thus we have shown \eqref{eq:D_N 1} in the case (a). 

\item[(b)]
In this case, $\tJ=[0,\oj]$. Since $C_N(0,n)=1$ for any $n$, we have 
\begin{align*}
\frac{1}{\Pi(\bm)}D_N(\bm,\bn)
&=\frac{1}{\Pi(\bm)}\begin{vmatrix}
1 & 1 & \cdots &1\\
C_{10} & C_{11} & \cdots & C_{1\oj}\\
\vdots & \vdots &  & \vdots\\
C_{\oj0} & C_{\oj1} & \cdots & C_{\oj\,\oj}
\end{vmatrix}\\
&=\frac{1}{\Pi(\bm)}\begin{vmatrix}
1 & 0 & \cdots &0\\
C_{10} & C_{11}-C_{10} & \cdots & C_{1\oj}-C_{1(\oj-1)}\\
\vdots & \vdots &  & \vdots\\
C_{\oj0} & C_{\oj1}-C_{\oj0} & \cdots & C_{\oj\,\oj}-C_{\oj(\oj-1)}
\end{vmatrix}\\
&=\det\biggl(\frac{1}{m_{j_1}}\bigl(C_{j_1j_2}-C_{j_1(j_2-1)}\bigr)\biggr)_{j_1,j_2\in J} 
\end{align*}
which is the expression in \eqref{eq:D_N 1 proof} and the rest of the proof is the same. 
\end{enumerate}

Next we show \eqref{eq:D_N 2}. By renumbering if necessary, we assume $J'=[1,\oj]$. 
Put $C'_{j_1j_2}=C_N(\tm_{j_1},\tn_{j_2}-1)$. 
Again we consider two cases: (a) $\oj+1\notin J$ or (b) $\oj+1\in J$. 

\begin{enumerate}
\item[(a)] 
In this case, $\tJ=J'=[1,\oj]$. If we set $\tm_{\oj+1}\coloneqq N-1$, we have 
\begin{align}
\notag 
D_N(\bm,\bn-\bone)\frac{1}{\Pi(\bN-\bn)}
&=\det\biggl(C'_{j_1j_2}\frac{1}{N-n_{j_2}}\biggr)_{j_1,j_2\in J'}\\
\label{eq:D_N 2 proof}
&=\det\biggl(\bigl(C'_{j_1j_2}-C'_{(j_1+1)j_2}\bigr)\frac{1}{N-n_{j_2}}\biggr)_{j_1,j_2\in J'}
\intertext{since $C_N(N-1,n-1)=0$ for any $n\in[1,N-1]$. By \Cref{lem:C_N} (2), this is }
\notag
&=\det\Biggl(\sum_{\tm_{j_1}<a_{j_1}\le\tm_{j_1+1}}C_N(a_{j_1},n_{j_2})\frac{1}{n_{j_2}}\Biggr)_{j_1,j_2\in J'}\\
\notag
&=\sum_{\substack{\ba\in[1,N-1]^{J'}\\ \bm\vartriangleleft\ba}}
D_N(\ba,\bn)\frac{1}{\Pi(\bn)} 
\end{align}
(again, the last equality holds regardless whether $J=J'$ or $J\subsetneq J'$). 
Thus we have shown \eqref{eq:D_N 2} in the case (a). 

\item[(b)] 
In this case, $\tJ=[1,\oj+1]$. Since $C_N(m,N-1)=1$ for any $m\in[0,N-1]$, we have 
\begin{align*}
D_N(\bm,\bn-\bone)\frac{1}{\Pi(\bN-\bn)}
&=\begin{vmatrix}
C'_{11} & \cdots & C'_{1\oj} & 1 \\
\vdots &  & \vdots & \vdots \\
C'_{\oj1} & \cdots & C'_{\oj\,\oj} & 1 \\
C'_{(\oj+1)1} & \cdots & C'_{(\oj+1)\oj} &1
\end{vmatrix}\frac{1}{\Pi(\bN-\bn)}\\
&=\begin{vmatrix}
C'_{11}-C'_{21} & \cdots & C'_{1\oj}-C'_{2\oj} & 0 \\
\vdots &  & \vdots & \vdots \\
C'_{\oj1}-C'_{(\oj+1)1} & \cdots & C'_{\oj\,\oj}-C'_{(\oj+1)\oj} & 0 \\
C'_{(\oj+1)1} & \cdots & C'_{(\oj+1)\oj} &1
\end{vmatrix}\frac{1}{\Pi(\bN-\bn)}\\
&=\det\biggl(\bigl(C'_{j_1j_2}-C'_{(j_1+1)j_2}\bigr)\frac{1}{N-n_{j_2}}\biggr)_{j_1,j_2\in J'}, 
\end{align*}
which is the expression in \eqref{eq:D_N 2 proof}, and the rest of the proof is the same. 
\end{enumerate}
The proof is complete. 
\end{proof}

Now let us introduce the following connected sum 
\begin{equation*}
\begin{split}
Z(\bk;q)\coloneqq \sum_{\substack{\bm_p\; (p_0\le p\le q)\\
\bn^{(l)}_p\; (q<p\le p_1)}}
&\Biggl(\prod_{p=p_0}^q \frac{1}{\Pi(\bm_{p})^{k_p}}\Biggr)\cdot D_N(\bm_q,\bn^{(1)}_{q+1}-\bone)\\
\cdot&\Biggl(\prod_{p=q+1}^{p_1} 
\frac{1}{\Pi(\bN-\bn^{(1)}_{p})\Pi(\bn^{(2)}_{p})\cdots \Pi(\bn^{(k_p)}_{p})}\Biggr)
\end{split}
\end{equation*}
for $q=p_0-1,p_0,\ldots,p_1$. 
Here $\bm_p\in [1,N-1]^{J_p}$ for $p_0\le p\le q$ run satisfying 
\[\bm_{p_0}\vartriangleleft\cdots\vartriangleleft \bm_q, \]
and $\bn^{(l)}_p\in [1,N-1]^{J_p}$ for $q<p\le p_1$ and $1\le l\le k_p$ run satisfying 
\[\bn^{(l)}_p\trianglelefteq \bn^{(l+1)}_p\ \text{ for $1\le l<k_p$ and }\ 
\bn^{(k_p)}_p\vartriangleleft \bn^{(1)}_{p+1}\ \text{for $q<p<p_1$}. \]
We also set $\bm_{p_0-1}$ and $\bn_{p_1+1}^{(1)}$ to be the empty tuple, 
which is compatible with the fact that $J_{p_0-1}$ and $J_{p_1+1}$ are the empty set. 

\begin{proof}[Proof of \Cref{thm:MSW Schur}]
By using \Cref{lem:D_N empty}, one can check that  
\[\zeta_{<N}(\bk)=Z(\bk;p_1),\qquad \zeta_{<N}^\flat(\bk)=Z(\bk;p_0-1). \]
On the other hand, for $p_0\le q\le p_1$, one shows the equality $Z(\bk;q)=Z(\bk;q-1)$ 
by applying \eqref{eq:D_N 1} $k_q$ times and then \eqref{eq:D_N 2} once 
(the equivalence \eqref{eq:n-1} is also used at the first application of \eqref{eq:D_N 1}). 
Thus we have 
\[\zeta_{<N}(\bk)=Z(\bk;p_1)=Z(\bk;p_1-1)=\cdots=Z(\bk;p_0-1)=\zeta_{<N}^\flat(\bk), \]
as desired. 
\end{proof}

\section{Hoffman's duality identity}
For a non-empty index (in the usual sense) $\bk=(k_1,\ldots,k_r)$, 
we define the \emph{multiple sum of Hoffman's type} by 
\[H_{<N}(\bk)=\sum_{1\le m_1\le\cdots\le m_r<N}\frac{(-1)^{m_r-1}}{m_1^{k_1}\cdots m_r^{k_r}}
\binom{N-1}{m_r}. \]
On the other hand, the \emph{Hoffman dual} of the index $\bk$ is defined by 
\[\bk^\vee=(\underbrace{1,\ldots,1}_{k_1}+\underbrace{1,\ldots,1}_{k_2}+\cdots
+\underbrace{1,\ldots,1}_{k_r}).\]
In other words, $\bk^\vee$ is obtained from 
\[\bk=(\underbrace{1+\cdots+1}_{k_1},\underbrace{1+\cdots+1}_{k_2},\ldots,\underbrace{1+\cdots+1}_{k_r})\]
by changing plus symbols to commas and vice versa. 

\begin{thm}[Hoffman's duality identity]\label{thm:Hoffman duality}
For any non-empty index $\bk=(k_1,\ldots,k_r)$ and any integer $N>0$, we have 
\begin{equation}\label{eq:Hoffman duality}
\zeta_{<N}^\star(\bk^\vee)=H_{<N}(\bk). 
\end{equation}
\end{thm}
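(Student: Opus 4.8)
The plan is to combine the star MSW formula with a direct, connected-sum evaluation. By \Cref{thm:MSW star} we have $\zeta_{<N}^\star(\bk^\vee)=\zeta_{<N}^{\star\flat}(\bk^\vee)$, so \eqref{eq:Hoffman duality} is equivalent to the identity
\[H_{<N}(\bk)=\zeta_{<N}^{\star\flat}(\bk^\vee),\]
and it suffices to prove this last equality directly. I would establish it by the computation of \S3. Since $\bk^\vee$ corresponds to a single row, every interval $J_p$ is a singleton, so the determinants $D_N$ collapse to the scalar connector $C_N$ and only \Cref{lem:C_N} is needed, not the full \Cref{lem:D_N}.

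First I would set up a connected sum interpolating between the two sides, exactly as in the proof of \Cref{thm:MSW Schur}. The target $\zeta_{<N}^{\star\flat}(\bk^\vee)$ is a product of $\flat$-factors of the shapes $\frac{1}{N-n}$ and $\frac1n$, while $H_{<N}(\bk)$ is a harmonic-type sum over $m_1\le\cdots\le m_r$ with integrand $\frac{1}{m_1^{k_1}\cdots m_r^{k_r}}$ carrying the extra weight $(-1)^{m_r-1}\binom{N-1}{m_r}$. I would define $Z(q)$ with a single connector $C_N(m,n)$ separating a left "harmonic'' block, built from the factors $\frac1{m_i}$, from a right "$\flat$'' block; its two ends are $H_{<N}(\bk)$ and $\zeta_{<N}^{\star\flat}(\bk^\vee)$. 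Transporting the connector one letter at a time is precisely \Cref{lem:C_N}: relation (1) moves it past a $\frac1b$ factor, relation (2) past an $\frac{1}{N-n}$ factor, and \eqref{eq:n-1} records the $\vartriangleleft\!\leftrightarrow\!\trianglelefteq$ bookkeeping. The monotonicity $m_1\le\cdots\le m_r$ of $H_{<N}(\bk)$ matches the $\vartriangleleft/\trianglelefteq$ conditions of $\bk^\vee$ because a plus sign of $\bk^\vee$ sits exactly at a block boundary of $\bk$, which is where the two relations of \Cref{lem:C_N} alternate.

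The genuinely new ingredient — the point where the sign and the binomial coefficient of $H_{<N}(\bk)$ are produced — is the cap at the $H$-end of the chain. Because $C_N$ is sign-free and \Cref{lem:C_N} introduces no signs, the factor $(-1)^{m_r-1}\binom{N-1}{m_r}$ cannot come from transport; instead it is consumed at the boundary through the orthogonality relation
\[\sum_{m=1}^{N-1}(-1)^{m-1}\binom{N-1}{m}\,C_N(m,n)=1\qquad(1\le n\le N-1),\]
which follows from $\binom{N-1}{m}C_N(m,n)=\binom nm$ together with $\sum_{m=0}^{n}(-1)^m\binom nm=0$ for $n\ge 1$. Thus summing the topmost variable $m_r$ against $(-1)^{m_r-1}\binom{N-1}{m_r}$ collapses the remaining connector to $1$, closing off the $H$-end just as the empty tuple closes off the harmonic end $Z(\bk;p_1)$ in \S3.

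The main obstacle I anticipate is organizational rather than analytic: writing $Z(q)$ so that the combinatorics of the Hoffman dual is respected. One must track, along the $w=k_1+\cdots+k_r$ letters, which transport steps are "$\le$-steps'' and which are "$<$-steps'' — equivalently, where the plus signs of $\bk^\vee$ fall relative to the blocks of $\bk$ — and verify at each step that the hypotheses of the applicable relation of \Cref{lem:C_N} hold, including the degenerate evaluations $C_N(0,n)=1$ and $C_N(m,N-1)=1$ at the extremes. Once this bookkeeping is fixed, every transport step is immediate from \Cref{lem:C_N}, the orthogonality relation supplies the cap, and the identity $H_{<N}(\bk)=\zeta_{<N}^{\star\flat}(\bk^\vee)$ — hence \Cref{thm:Hoffman duality} via \Cref{thm:MSW star} — follows.
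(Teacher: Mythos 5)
Your proposal is correct and is essentially the paper's own argument: the paper likewise deduces Hoffman's identity by combining \Cref{thm:MSW star} with a direct connector-method evaluation of the Hoffman-type sum (its \Cref{prop:MSW H}, $H_{<N}(\bk)=H^{\flat}_{<N}(\bk)$, which coincides with your target $H_{<N}(\bk)=\zeta^{\star\flat}_{<N}(\bk^\vee)$ after the relabeling $n_{ij}\leftrightarrow N-n_{ij}$), transporting the connector with \Cref{lem:C_N} and closing with the same binomial orthogonality $\sum_{m=1}^{N-1}(-1)^{m-1}\binom{N-1}{m}C_N(m,n)=1$. The only cosmetic discrepancy is that the intermediate connectors in this computation are differences $C_N(m,N-1)-C_N(m,n-1)=1-C_N(m,n-1)$ (degenerate $2\times2$ instances of $D_N$) rather than single values $C_N(m,n)$, but the bookkeeping you describe via the evaluations $C_N(0,n)=1$ and $C_N(m,N-1)=1$ handles exactly this point.
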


This theorem was proved by Hoffman \cite{H} and independently by Kawashima \cite{K1}. 
Some different proofs are also known, e.g., one based on the integral expression \cite{Y} and 
one by the connected method \cite{SY}.

One can combine this identity \eqref{eq:Hoffman duality} 
with the MSW formula \eqref{eq:MSW star} for $\zeta_{<N}^\star(\bk^\vee)$, 
and make the ``change of variables'' $n_{ij}\leftrightarrow N-n_{ij}$. 
Then one obtains the following MSW-like expression of the multiple sum of Hoffman's type: 

\begin{prop}\label{prop:MSW H}
We have 
\begin{equation}\label{eq:MSW H}
H_{<N}(\bk)=H_{<N}^{\flat}(\bk), 
\end{equation}
where 
\[H_{<N}^{\flat}(\bk)\coloneqq 
\sum_{\substack{0<n_{i1}\le\cdots\le n_{ik_i}<N\,(1\le i\le r)\\
n_{(i-1)1}\le n_{ik_i}\,(2\le i\le r)}}
\frac{1}{n_{r1}\cdots n_{rk_r}}\prod_{i=1}^{r-1}\frac{1}{(N-n_{i1})n_{i2}\cdots n_{ik_i}}. \]
\end{prop}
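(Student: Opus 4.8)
The plan is to derive \eqref{eq:MSW H} by substituting the star MSW formula \eqref{eq:MSW star} into Hoffman's identity \eqref{eq:Hoffman duality} and then performing the reflection $n_{ij}\leftrightarrow N-n_{ij}$ on the running variables. Concretely, Hoffman's identity gives $H_{<N}(\bk)=\zeta_{<N}^\star(\bk^\vee)$, and \Cref{thm:MSW star} rewrites the right-hand side as $\zeta_{<N}^{\star\flat}(\bk^\vee)$, an explicit $\flat$-sum indexed by the Hoffman dual $\bk^\vee$. The goal is thus to show that applying the change of variables $n\mapsto N-n$ to this sum $\zeta_{<N}^{\star\flat}(\bk^\vee)$ transforms it into the claimed expression $H_{<N}^\flat(\bk)$. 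Since both sides are finite sums of products of reciprocals, the whole statement reduces to a bookkeeping identity between index sets, and no analytic input is needed.

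First I would make the combinatorics of the Hoffman dual explicit. Writing $\bk=(k_1,\ldots,k_r)$, the dual $\bk^\vee$ is obtained by interchanging the roles of commas and plus signs in the all-ones expansion, so its components and depth are completely determined by where the ``breaks'' fall among the $k_1+\cdots+k_r$ ones. The $\flat$-sum $\zeta_{<N}^{\star\flat}(\bk^\vee)$ from \eqref{eq:star flat sum} then has one running variable for each of these ones, subject to a chain of $\le$ relations punctuated by the strict/weak break conditions dictated by $\bk^\vee$, and each factor is either of the form $1/m$ or $1/(N-m)$ according to whether the corresponding slot is an ``interior'' position or the first position of a block. The key observation is that under the substitution $n\mapsto N-n$, a factor $1/(N-n)$ becomes $1/n$ and vice versa, while every relation $n\le n'$ becomes $N-n\ge N-n'$, i.e.\ all inequalities reverse. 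After also reversing the order in which the variables are listed, the reversed chain of inequalities should match exactly the index set of $H_{<N}^\flat(\bk)$, and the factors should line up so that the last block $(N-n_{r1})n_{r2}\cdots$ loses its $N-\cdot$ on the first variable.

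The main obstacle, and the step deserving the most care, will be verifying that the reflection sends the break pattern of $\bk^\vee$ precisely to the break pattern defining $H_{<N}^\flat(\bk)$ --- in particular that the ``strict vs.\ weak'' decorations on the inequalities transform correctly (recall from \Cref{rem:n-1} that $\vartriangleleft$ and $\trianglelefteq$ are not simply negations of one another), and that the asymmetric factor $\binom{N-1}{m_r}$ appearing in $H_{<N}(\bk)$ is correctly accounted for. I would track this by comparing, position by position, the block structure of $\bk^\vee$ against the comma structure of $\bk$: a comma in $\bk$ corresponds to a plus in $\bk^\vee$ (a weak link within a $\flat$-block) and vice versa, so reflection should interchange these two types of links in exactly the way needed.

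I would organize the argument as a single chain of equalities,
\[
H_{<N}(\bk)=\zeta_{<N}^\star(\bk^\vee)=\zeta_{<N}^{\star\flat}(\bk^\vee)=H_{<N}^\flat(\bk),
\]
where the first equality is \Cref{thm:Hoffman duality}, the second is \Cref{thm:MSW star}, and the third is the reflection of variables. Once the index-set matching in the previous paragraph is checked, the third equality is immediate because the substitution $n\mapsto N-n$ is a bijection of the summation domain that matches the summands termwise. I expect the proof to be short modulo this careful matching of inequality decorations and factors; a small worked example (e.g.\ a low-depth $\bk$) would serve as a sanity check, but the essential content is purely the combinatorial translation effected by $n\mapsto N-n$ on the dual index.
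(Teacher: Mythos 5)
Your proposal is correct and is exactly the paper's own proof: the paper derives \eqref{eq:MSW H} by combining Hoffman's identity \eqref{eq:Hoffman duality} with the star MSW formula \eqref{eq:MSW star} and then making the change of variables $n_{ij}\leftrightarrow N-n_{ij}$, which is precisely your chain $H_{<N}(\bk)=\zeta_{<N}^\star(\bk^\vee)=\zeta_{<N}^{\star\flat}(\bk^\vee)=H_{<N}^\flat(\bk)$. One minor simplification over your outline: since every inequality appearing in \eqref{eq:star flat sum} and in the definition of $H_{<N}^\flat(\bk)$ is weak, the strict-versus-weak bookkeeping you flag as the delicate step never actually arises, and the reflection matches the two summation domains term by term.
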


The relation of the identities \eqref{eq:MSW star}, \eqref{eq:Hoffman duality} and \eqref{eq:MSW H} is 
summarized as follows: 
\[\xymatrix{
\zeta_{<N}^\star(\bk^\vee) 
\ar@{=}[r]^{\eqref{eq:MSW star}}  
\ar@{=}[d]_{\eqref{eq:Hoffman duality}} & 
\zeta_{<N}^{\star\flat}(\bk^\vee) 
\ar@{=}[d]^{n_{ij}\leftrightarrow N-n_{ij}}\\
H_{<N}(\bk) 
\ar@{=}[r]^{\eqref{eq:MSW H}} & 
H_{<N}^{\flat}(\bk) 
}\]

It is worth noticing that one can also prove \eqref{eq:MSW H} directly, 
because one then obtains yet another proof of Hoffman's identity \eqref{eq:Hoffman duality}. 
For this, one starts from showing that 
\begin{align*}
\sum_{m_1=1}^{m_2}\frac{1}{m_1^{k_1}}
&=\sum_{1\le m_1\le m_2}\frac{1}{m_1^{k_1}}\bigl(C_N(m_1,N-1)-C_N(m_1,0)\bigr)\\
&\overset{(1)}{=}\sum_{\substack{1\le m_1\le m_2\\ 1\le n_{1k_1}<N}}\frac{1}{m_1^{k_1-1}}
\bigl(C_N(m_1,n_{1k_1})-C_N(m_1,0)\bigr)\frac{1}{n_{1k_1}}\\ 
&\overset{(1)}{=}\sum_{\substack{1\le m_1\le m_2\\ 1\le n_{1(k_1-1)}\le n_{1k_1}<N}}\frac{1}{m_1^{k_1-2}}
\bigl(C_N(m_1,n_{1(k_1-1)})-C_N(m_1,0)\bigr)\frac{1}{n_{1(k_1-1)}n_{1k_1}}\\ 
&\overset{(1)}{=}\cdots
\overset{(1)}{=}\sum_{\substack{1\le m_1\le m_2\\ 1\le n_{11}\le\cdots\le n_{1k_1}<N}}
\bigl(C_N(m_1,n_{11})-C_N(m_1,0)\bigr)\frac{1}{n_{11}\cdots n_{1k_1}}\\
&\overset{(2)}{=}\sum_{1\le n_{11}\le\cdots\le n_{1k_1}<N}
\bigl(C_N(0,n_{11}-1)-C_N(m_2,n_{11}-1)\bigr)\frac{1}{(N-n_{11})n_{12}\cdots n_{1k_1}}\\
&=\sum_{1\le n_{11}\le\cdots\le n_{1k_1}<N}
\bigl(C_N(m_2,N-1)-C_N(m_2,n_{11}-1)\bigr)\frac{1}{(N-n_{11})n_{12}\cdots n_{1k_1}}. 
\end{align*}
Here one uses \Cref{lem:C_N} (1) and (2) as indicated. 
Note also that $C_N(m_1,N-1)=1$, $C_N(m_1,0)=0$ and $C_N(0,n_{11}-1)=1=C_N(m_2,N-1)$. 
By repeating such transformations, one arrives at 
\begin{align*}
H_{<N}(\bk)=\sum_{\substack{0<n_{i1}\le\cdots\le n_{ik_i}<N\,(1\le i\le r)\\
n_{(i-1)1}\le n_{ik_i}\,(2\le i\le r)}}
&\sum_{m_r=1}^{N-1}(-1)^{m_r-1}\binom{N-1}{m_r}C_N(m_r,n_{r1})\\
&\cdot\frac{1}{n_{r1}\cdots n_{rk_r}}\prod_{i=1}^{r-1}\frac{1}{(N-n_{i1})n_{i2}\cdots n_{ik_i}}. 
\end{align*}
Then, since 
\[\sum_{m_r=1}^{N-1}(-1)^{m_r-1}\binom{N-1}{m_r}C_N(m_r,n_{r1})
=\sum_{m_r=1}^{n_{r1}}(-1)^{m_r-1}\binom{n_{r1}}{m_r}=1 \]
by the binomial theorem, the equality \eqref{eq:MSW H} follows. 

\begin{rem}
Beginning with $\zeta^\star_{<N}(\bk)$ instead of $H_{<N}(\bk)$ and following the same computation, 
one arrives at $\zeta^{\star\flat}_{<N}(\bk)$ 
(in this case, the final step using the binomial theorem is not necessary). 
This direct proof of \Cref{thm:MSW star} is just a specialization of the proof of \Cref{thm:MSW Schur} 
given in the previous section. 
\end{rem}

\section{Relationship with Kawashima's identity}

In this section, we relate \Cref{thm:MSW star} to an identity due to Kawashima. 
For a non-empty index $\bk=(k_1,\ldots,k_r)$ and a complex variable $z$ with $\Re(z)>-1$, 
consider the nested series 
\begin{align*}
G_\bk(z)\coloneqq 
\sum_{\substack{0<m_{11}\le\cdots\le m_{1k_1}\\
\phantom{0}<m_{21}\le\cdots\le m_{2k_2}\\
\vdots\\
\phantom{0}<m_{r1}\le\cdots\le m_{rk_r}}}
&\prod_{i=1}^{r-1}\frac{1}{(m_{i1}+z)\cdots(m_{i(k_i-1)}+z)m_{ik_i}}\\
\cdot&\frac{1}{(m_{r1}+z)\cdots(m_{r(k_r-1)}+z)}\biggl(\frac{1}{m_{rk_r}}-\frac{1}{m_{rk_r}+z}\biggr). 
\end{align*}
Here $m_{ij}$ runs over all positive integers satisfying the indicated inequalities, 
hence $G_\bk(z)$ is an \emph{infinite} series. For example, 
\[G_{2,1,2}(z)
=\sum_{0<m_1\le m_2<m_3<m_4\le m_5}
\frac{1}{(m_1+z)m_2\,m_3(m_4+z)}\biggl(\frac{1}{m_5}-\frac{1}{m_5+z}\biggr). \]
This definition differs from Kawashima's original one in that we take the Hoffman dual of the index. 
In our notation, Kawashima's identity is stated as follows. 

\begin{prop}[{\cite[Proposition 3.2]{K2}}]\label{prop:Kawashima}
For any non-empty index $\bk$ and any integer $N>0$, 
\begin{equation}\label{eq:Kawashima}
G_{\overleftarrow{\bk}}(N-1)=\zeta_{<N}^\star(\bk)
\end{equation}
holds, where  $\overleftarrow{\bk}\coloneqq(k_r,\ldots,k_1)$ for $\bk=(k_1,\ldots,k_r)$.  
\end{prop}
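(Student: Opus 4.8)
The plan is to prove the identity by first rewriting its right-hand side with the star MSW formula and then matching the result against the series $G_{\overleftarrow{\bk}}(N-1)$. By \Cref{thm:MSW star} we have $\zeta_{<N}^\star(\bk)=\zeta_{<N}^{\star\flat}(\bk)$, so it suffices to prove the bridge identity
\[G_{\overleftarrow{\bk}}(N-1)=\zeta_{<N}^{\star\flat}(\bk),\]
which I would establish without reference to \Cref{thm:MSW star}. This is exactly what should make precise the assertion that Kawashima's identity is equivalent to the star MSW formula: once the bridge is available, \eqref{eq:Kawashima} and \eqref{eq:MSW star} are mutually equivalent.

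To prove the bridge I would evaluate the nested infinite series $G_{\overleftarrow{\bk}}(N-1)$ by summing its variables one at a time, starting from the innermost (smallest) variable of the chain $m_{11}\le\cdots$ and working outward. The decisive feature is that $z=N-1$ is a positive integer, so each infinite summation collapses to a finite one. Concretely, the partial-fraction identity
\[\frac{1}{(m+c)(m+c')}=\frac{1}{c'-c}\left(\frac{1}{m+c}-\frac{1}{m+c'}\right)\]
together with the telescoping
\[\sum_{m\ge a}\left(\frac{1}{m+c}-\frac{1}{m+c'}\right)=\sum_{j=a+c}^{a+c'-1}\frac{1}{j}\]
turns the terminal factor of the form $\frac{1}{m}-\frac{1}{m+z}$, and the shifted factors $\frac{1}{m+z}=\frac{1}{m+N-1}$, into finite harmonic sums indexed by newly introduced gap variables. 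After the reflection $j\mapsto N-j$, the shifted denominators $m+N-1$ become denominators of the form $N-n$, which is precisely the shape of the weight $\prod_i\frac{1}{(N-n_{i1})n_{i2}\cdots n_{ik_i}}$ defining $\zeta_{<N}^{\star\flat}(\bk)$. A check in the depth-one case $\bk=(2)$ and the case $\bk=(1,1)$ confirms that this mechanism reproduces $\zeta_{<N}^{\star\flat}(\bk)$ exactly.

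Rather than performing these telescopings by hand, I would organize them inductively along the reversed index $\overleftarrow{\bk}$, transporting the evaluation across one variable at a time, in the same spirit as the direct computation of \eqref{eq:MSW H} in \S4 and the connected-sum proof of \Cref{thm:MSW Schur}. Since $\zeta_{<N}^\star$ and $\zeta_{<N}^{\star\flat}$ correspond to the degenerate (single-chain) Schur shapes, the determinants of \Cref{lem:D_N} all reduce to $1$ by \Cref{lem:D_N empty}, so no determinantal device is needed: the elementary scalar identities displayed above, which play here the role that \Cref{lem:C_N} plays in \S4, suffice at each step.

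The main obstacle I expect is bookkeeping rather than any single hard estimate. One must match the single totally ordered chain underlying $G_{\overleftarrow{\bk}}$, with its strict steps between blocks, against the zigzag system of weak inequalities $n_{(i-1)1}\le n_{ik_i}$ defining $\zeta_{<N}^{\star\flat}(\bk)$, and verify that the reflection $j\mapsto N-j$ transports these relations correctly across block boundaries. A second delicate point is to justify the interchange of summations and to confirm that every infinite sum that arises genuinely telescopes to a finite harmonic sum, with no boundary term lost; this is where the hypothesis $\Re(z)>-1$ guaranteeing convergence, specialized to the integer $z=N-1$, must be used carefully. For blocks with $k_i\ge 2$ the peeling step is iterated, and keeping the nested summation ranges consistent with the inequalities at each stage is the part that demands the most care.
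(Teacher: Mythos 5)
Your overall plan coincides with the paper's own (new) proof of \Cref{prop:Kawashima}: reduce via \Cref{thm:MSW star} to the bridge identity $G_{\overleftarrow{\bk}}(N-1)=\zeta_{<N}^{\star\flat}(\bk)$, which is exactly \Cref{prop:G flat}, and prove that bridge directly by eliminating one summation variable at a time using partial fractions and telescoping; your two displayed identities are precisely the ingredients of the paper's transformations \eqref{eq:trans1} and \eqref{eq:trans2}.

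There is, however, one concrete error that would stop the argument as written: you say you will sum ``starting from the innermost (smallest) variable of the chain $m_{11}\le\cdots$ and working outward.'' The mechanism you describe only runs in the opposite direction. The terminal difference $\frac{1}{m_{rk_r}}-\frac{1}{m_{rk_r}+z}$, which is the sole source of the difference structure your telescoping identity needs, is attached to the \emph{largest} variable, and it is the sum over that variable (and then over each successive largest remaining variable, once its upper neighbour has been summed out) that is infinite and collapses to a finite harmonic sum. If you literally begin with the smallest variable, the first step is $\sum_{m_{11}=1}^{m_{12}}\frac{1}{m_{11}+N-1}$: a finite sum with a single factor, no difference to telescope, so neither of your identities applies and the iteration never starts. (An innermost-outward peeling does work for the \emph{finite} sums in \S 3--\S 4, but it requires the connector trick of \Cref{lem:C_N}, i.e., inserting $1=C_N(m,N-1)-C_N(m,0)$, not your telescoping identity.) Note that the rest of your own description --- ``each infinite summation collapses to a finite one,'' the terminal factor turning into finite harmonic sums, and your checks for $\bk=(2)$ and $\bk=(1,1)$ --- is consistent only with the outermost-inward order, exactly as in the paper's computation of $G_{2,2}(N-1)$, where one sums over $m_4$, then $m_3$, then $m_2$, then $m_1$. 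So the fix is simply to reverse the direction, after which your argument is the paper's. A secondary inaccuracy: the connectors relevant to the single-row shape do not ``all reduce to $1$'' (\Cref{lem:D_N empty} concerns only empty intervals); for a consecutive pair of singletons $(\{j\},\{j-1\})$ the connector is the $2\times 2$ determinant $1-C_N(m_j,n_{j-1})$, which is why differences of $C_N$'s appear throughout the \S 4 computation --- though you are right that no genuinely determinantal machinery is needed in this degenerate case.
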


\begin{rem}
The main result of the article \cite{K2} is the identity $G_{\overleftarrow{\bk}}(z)=F_\bk(z)$, 
where $F_\bk(z)$ is a function (called the \emph{Kawashima function}) characterized as 
the Newton series that interpolates multiple star harmonic sums: $F_\bk(N-1)=\zeta_{<N}^\star(\bk)$. 
Thus the above \Cref{prop:Kawashima} states that $G_{\overleftarrow{\bk}}(N-1)=F_\bk(N-1)$ holds for any integer $N>0$. 
This is an important step in Kawashima's proof of the identity $G_{\overleftarrow{\bk}}(z)=F_\bk(z)$ as functions. 
\end{rem}

Now let us combine \Cref{prop:Kawashima} with \Cref{thm:MSW star}. 
The result is the following: 

\begin{prop}\label{prop:G flat}
For any non-empty index $\bk$ and any integer $N>0$, 
\begin{equation}\label{eq:G flat}
G_{\overleftarrow{\bk}}(N-1)=\zeta_{<N}^{\star\flat}(\bk)
\end{equation}
holds. 
\end{prop}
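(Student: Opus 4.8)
The plan is to obtain \eqref{eq:G flat} by chaining together the two results we already have at our disposal. The statement of \Cref{prop:G flat} is $G_{\overleftarrow{\bk}}(N-1)=\zeta_{<N}^{\star\flat}(\bk)$, and we may freely invoke \Cref{prop:Kawashima}, which gives $G_{\overleftarrow{\bk}}(N-1)=\zeta_{<N}^\star(\bk)$, as well as \Cref{thm:MSW star}, which gives $\zeta_{<N}^\star(\bk)=\zeta_{<N}^{\star\flat}(\bk)$. Since this \Cref{prop:G flat} is explicitly introduced in the text as the result of combining \Cref{prop:Kawashima} with \Cref{thm:MSW star}, the entire proof is nothing more than substituting one equation into the other.

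Concretely, the proof is a single line. First I would apply \eqref{eq:Kawashima} to rewrite the left-hand side $G_{\overleftarrow{\bk}}(N-1)$ as $\zeta_{<N}^\star(\bk)$. Then I would apply \eqref{eq:MSW star} to rewrite $\zeta_{<N}^\star(\bk)$ as $\zeta_{<N}^{\star\flat}(\bk)$, which is exactly the right-hand side of \eqref{eq:G flat}. The chain of equalities is
\[
G_{\overleftarrow{\bk}}(N-1)\overset{\eqref{eq:Kawashima}}{=}\zeta_{<N}^\star(\bk)\overset{\eqref{eq:MSW star}}{=}\zeta_{<N}^{\star\flat}(\bk),
\]
and the claim follows immediately. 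There is no hypothesis to verify beyond non-emptiness of $\bk$ and $N>0$, both of which are assumed identically in \Cref{prop:Kawashima} and (for $N>0$) in \Cref{thm:MSW star}, so the two invoked results apply verbatim.

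Since this is a pure composition of two established identities, there is genuinely no obstacle in the proof itself; the only subtle point worth a moment's attention is bookkeeping of the arrow $\overleftarrow{\bk}$. One must be careful that \Cref{prop:Kawashima} already pairs $G_{\overleftarrow{\bk}}(N-1)$ with $\zeta_{<N}^\star(\bk)$ (not with $\zeta_{<N}^\star(\overleftarrow{\bk})$), so the reversal is absorbed on the Kawashima side and the index fed to \Cref{thm:MSW star} is simply $\bk$ itself. With that matching confirmed, the statement is immediate, and I would present it as a two-step application of the cited identities without any further computation.
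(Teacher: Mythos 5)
Your proof is correct and is essentially the paper's own derivation: the paper introduces \Cref{prop:G flat} precisely as the result of combining \Cref{prop:Kawashima} with \Cref{thm:MSW star}, deducing \eqref{eq:G flat} as the third side of the triangle of equalities, exactly as you do. Your observation that the reversal $\overleftarrow{\bk}$ is absorbed on the Kawashima side (so that \Cref{thm:MSW star} is applied to $\bk$ itself) is the right bookkeeping point; the paper's separately sketched \emph{direct} proof of \eqref{eq:G flat} via the transformations \eqref{eq:trans1} and \eqref{eq:trans2} serves a different purpose, namely to obtain a new proof of \Cref{prop:Kawashima}, and is not needed for the proposition as stated.
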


There is a triangle 
\[\xymatrix{
\zeta_{<N}^{\star}(\bk)
\ar@{=}[rr]^{\eqref{eq:MSW star}}
\ar@{=}[rd]_{\eqref{eq:Kawashima}}
&& \zeta_{<N}^{\star\flat}(\bk) 
\ar@{=}[ld]^{\eqref{eq:G flat}} \\
& G_{\overleftarrow{\bk}}(N-1) & 
}\]
of three quantities, and we have deduced the equality \eqref{eq:G flat} from the other two. 
On the other hand, \eqref{eq:G flat} can be shown directly, by using the transformations 
\begin{equation}\label{eq:trans1}
\begin{split}
&\frac{1}{m'+N-1}\sum_{m=m'}^\infty\biggl(\frac{1}{m+N-1-n''}-\frac{1}{m+N-n'}\biggr)\\
&=\frac{1}{m'+N-1}\sum_{n=n'}^{n''}\frac{1}{m'+N-1-n}
=\sum_{n=n'}^{n''}\biggl(\frac{1}{m'+N-1-n}-\frac{1}{m'+N-1}\biggr)\frac{1}{n} 
\end{split}
\end{equation}
and 
\begin{equation}\label{eq:trans2}
\begin{split}
&\frac{1}{m'}\sum_{m=m'+1}^\infty\biggl(\frac{1}{m+N-1-n''}-\frac{1}{m+N-n'}\biggr)\\
&=\frac{1}{m'}\sum_{n=n'}^{n''}\frac{1}{m'+N-n}
=\sum_{n=n'}^{n''}\biggl(\frac{1}{m'}-\frac{1}{m'+N-n}\biggr)\frac{1}{N-n} 
\end{split}
\end{equation}
repeatedly. For instance, starting from 
\[G_{2,2}(N-1)=\sum_{0<m_1\le m_2<m_3\le m_4}
\frac{1}{(m_1+N-1)m_2(m_3+N-1)}\biggl(\frac{1}{m_4}-\frac{1}{m_4+N-1}\biggr), \]
one proceeds 
\begin{align*}
\frac{1}{m_3+N-1}\sum_{m_4=m_3}^\infty&\biggl(\frac{1}{m_4}-\frac{1}{m_4+N-1}\biggr)\\
\overset{\eqref{eq:trans1}}{=}
\sum_{n_4=1}^{N-1}&\biggl(\frac{1}{m_3+N-1-n_4}-\frac{1}{m_3+N-1}\biggr)\frac{1}{n_4}, 
\displaybreak[1]\\
\frac{1}{m_2}\sum_{m_3=m_2+1}^\infty&\biggl(\frac{1}{m_3+N-1-n_4}-\frac{1}{m_3+N-1}\biggr)\\
\overset{\eqref{eq:trans2}}{=}
\sum_{n_3=1}^{n_4}&\biggl(\frac{1}{m_2}-\frac{1}{m_2+N-n_3}\biggr)\frac{1}{N-n_3}, 
\displaybreak[1]\\
\frac{1}{m_1+N-1}\sum_{m_2=m_1}^\infty&\biggl(\frac{1}{m_2}-\frac{1}{m_2+N-n_3}\biggr)\\
\overset{\eqref{eq:trans1}}{=}
\sum_{n_2=n_3}^{N-1}&\biggl(\frac{1}{m_1+N-1-n_2}-\frac{1}{m_1+N-1}\biggr)\frac{1}{n_2}, 
\end{align*}
and ends with a telescopic sum 
\[\sum_{m_1=1}^\infty\biggl(\frac{1}{m_1+N-1-n_2}-\frac{1}{m_1+N-1}\biggr)
=\sum_{n_1=1}^{n_2}\frac{1}{N-n_1}. \]
The result is 
\[G_{2,2}(N-1)=
\sum_{n_4=1}^{N-1}\sum_{n_3=1}^{n_4}\sum_{n_2=n_3}^{N-1}\sum_{n_1=1}^{n_2}
\frac{1}{(N-n_1)n_2(N-n_3)n_4}
=\zeta_{<N}^{\star\flat}(2,2). \]

Thus we obtain a new proof of \Cref{prop:Kawashima} 
through \Cref{thm:MSW star} and \Cref{prop:G flat}. 
Conversely, Propositions \ref{prop:Kawashima} and \ref{prop:G flat} implies \Cref{thm:MSW star}, 
and hence \Cref{thm:MSW}. 
This could be another possible way of finding the MSW formula! 

\section*{Acknowledgment}
The author would like to thank Professor Shin-ichiro Seki for carefully reading the first draft 
and giving many valuable comments, and thank Yuto Tsuruta for pointing out that 
it is necessary to reverse the index in Proposition 5.1 and subsequent arguments. 
The author also would like to thank the referee for giving helpful suggestions. 
This research was supported by JSPS KAKENHI JP18H05233 and JP21K03185. 



\begin{thebibliography}{99}
\bibitem{HMS}
M.~Hirose, T.~Matsusaka and S.~Seki, 
A discretization of the iterated integral expression of the multiple polylogarithm, 
preprint, 2024, arXiv:2404.15210. 

\bibitem{HMO}
M.~Hirose, H.~Murahara and T.~Onozuka, 
Integral expressions for Schur multiple zeta values, 
\textit{Indag.\ Math.} (2024). 
\url{https://doi.org/10.1016/j.indag.2024.05.010}

\bibitem{H}
M.~E.~Hoffman, 
Quasi-symmetric funtions and mod $p$ multiple harmonic sums, 
\textit{Kyushu J.\ Math.} \textbf{69} (2015), 345--366. 

\bibitem{KY}
M.~Kaneko and S.~Yamamoto, 
A new integral-series identity of multiple zeta values and regularizations, 
\textit{Selecta Math.\ New Series} \textbf{24} (2018), 2499--2521. 

\bibitem{K1}
G.~Kawashima, 
A class of relations among multiple zeta values, 
\textit{J.\ Number Theory} \textbf{129} (2009), 755--788. 

\bibitem{K2}
G.~Kawashima, 
Multiple series expressions for the Newton series which interpolate 
finite multiple harmonic sums, 
preprint, 2009, arXiv:0905.0243. 

\bibitem{MSW}
T.~Maesaka, S.~Seki and T.~Watanabe, 
Deriving two dualities simultaneously from a family of identities 
for multiple harmonic sums, 
preprint, 2024, arXiv:2402.05730. 

\bibitem{NPY}
M.~Nakasuji, O.~Phuksuwan and Y.~Yamasaki, 
On Schur multiple zeta functions: A combinatoric generalization of multiple zeta functions, 
\textit{Adv.\ Math.} \textbf{333} (2018), 570--619.

\bibitem{S}
S.~Seki, 
A proof of the extended double shuffle relation without using integrals, 
preprint, 2024, arXiv:2402.18300. 

\bibitem{SY}
S.~Seki and S.~Yamamoto, 
Ohno-type identities for multiple harmonic sums, 
\textit{J.\ Math.\ Soc.\ Japan} \textbf{72} (2020), 673--686. 

\bibitem{Y}
S.~Yamamoto, 
Multiple zeta-star values and multiple integrals, 
\textit{RIMS K\^oky\^uroku Bessatsu} \textbf{B68} (2017), 3--14.  

\end{thebibliography}
\end{document}